\title{Noetherian Rings of non-local Rank}
\author{Dmitry Kudryakov}
\address{Saint Petersburg University, 7/9 Universitetskaya nab., St. Petersburg, 199034, Russia}
\email{d.a.kudryakov@gmail.com}
\thanks{Preprint available at \href{https://arxiv.org/abs/2501.05940v2}{arXiv:2501.05940v2} [math.AC]}
\newtheorem{theorem}{Theorem}[section]
\newtheorem{lemma}[theorem]{Lemma}
\newtheorem{corollary}[theorem]{Corollary}
\newtheorem{proposition}[theorem]{Proposition}
\theoremstyle{definition}
\newcommand{\ti}{\times}
\newcommand{\sm}{\setminus}
\renewcommand{\ss}{\subseteq}
\newcommand{\m}{\mathfrak{m}}
\newcommand{\p}{\mathfrak{p}}
\newcommand{\cn}{\colon}
\newcommand{\n}{\mathfrak{n}}
\newcommand{\q}{\mathfrak{q}}
\newcommand{\Sup}{\sup\limits}
\newcommand\Prod{\prod\limits}
\newcommand{\rk}{\operatorname{rk}}
\let\le\leqslant
\let\ge\geqslant
\newcommand{\brc}[1]{\!\left(#1\right)\!}
\newcommand{\brs}[1]{\!\left\{#1\right\}\!}
\newcommand{\Spec}[1]{\operatorname{Spec}#1}
\newcommand{\Max}{\operatorname{MaxSpec}}
\newcommand{\Min}{\operatorname{MinSpec}}
\newcommand{\nil}{\operatorname{nil}}
\newcommand{\lrk}{\operatorname{lrk}}
\newcommand{\hth}{\operatorname{ht}}
\begin{document}

\date{\today}

\subjclass[2020]{13E05, 13E15}

\keywords{Noetherian Rings, Ideals, Number of Generators, Rank, Localization}

\begin{abstract}
    The rank of a ring $R$ is the supremum of minimal cardinalities of generating sets of $I$, among all ideals $I$ in $R$.
    In this paper, we obtain a characterization of Noetherian rings $R$ whose rank is not equal to the supremum of ranks of localizations of $R$ at maximal ideals.
    It turns out that any such ring is a direct product of a finite number of local principal Artinian rings and Dedekind domains, at least one of which is not a principal ideal ring.
    As an application, we show that the rank of the ring of polynomials over an Artinian ring can be computed locally.
\end{abstract}

\maketitle

\section{Introduction}

Let $R$ be a commutative Noetherian ring.
For a cardinal number $n$, the condition $\rk R=n$ means that any ideal in the ring $R$ is generated by a set of cardinality $n$, but there exists an ideal that is not generated by a set of cardinality less than $n$.
Let us define
$
\lrk R\coloneqq \Sup_{\m\in\Max R}\rk R_\m.
$
In \cite{Cohen-1950} Cohen showed that for an integral domain $R$ we have $\rk R\le \lrk R+1$.
Using ideas of Forster \cite{Forster} and Swan \cite{Swan}, it is not difficult to show that the result of Cohen generalizes to arbitrary rings, i.e.
$$
\lrk R\le\rk R\le\lrk R+1,\qquad\text{for any Noetherian ring }R.
$$
(For details see Proposition \ref{upper bound for rank}.)
Let us say that $R$ is \emph{of local rank} if $\rk R=\lrk R$.
Hence one can pose a natural question: which rings are not of local rank?

Rings of dimension zero are of local rank, since semi-local rings are of local rank, due to \cite{Forster,Swan}.
For rings of dimension at least two
the answer is the same, due to the characterization of local rings of finite rank from \cite{Sally-1978}.
(For details see Propositions \ref{Artinian is locally ranked} and \ref{dimension 2 is not finite rank}.)
It remains to treat the case of dimension one.

It is well known that the rank of a non-principal Dedekind domain $R$ is two, which is greater than the rank of the localization of $R$ at any maximal ideal.
Hence non-principal Dedekind domains are not of local rank.
The following result of Clark shows that a non-Dedekind domain is of local rank.
\begin{theorem}[{\cite[Theorem 2.4]{Clark-2017}}]\label{motivation}
    Let $R$ be a one-dimensional non-normal domain. Then $R$ is of local rank.
\end{theorem}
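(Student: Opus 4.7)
My plan is to establish the nontrivial inequality $\rk R \le \lrk R$; combined with Proposition \ref{upper bound for rank}, this will give $\rk R = \lrk R$. The approach has two steps: first, use non-normality to force $\lrk R \ge 2$; then use a Swan-type refinement of the Forster bound to rule out the ``$+1$'' slack in the general inequality.

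For the first step, I would argue as follows. Since normality is a local property for Noetherian domains, the non-normality of $R$ produces a maximal ideal $\m$ such that $R_\m$ is not integrally closed in its fraction field. Then $R_\m$ is a one-dimensional Noetherian local domain that is not a DVR, so its maximal ideal $\m R_\m$ cannot be principal---principality of $\m R_\m$ together with $\dim R_\m = 1$ would force $R_\m$ to be a DVR by the standard structure theorem, and hence normal. Therefore $\rk R_\m \ge 2$, which gives $\lrk R \ge 2$.

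For the second step, I would apply Swan's improvement of the Forster bound to show that any nonzero ideal $I$ of $R$ can be generated by at most
\[
    \sup\brs{\mu_\p\brc{I} + \dim R/\p : \p\in\Spec R,\ I_\p\ne 0}
\]
elements, where $\mu_\p\brc{I}$ denotes the minimal number of generators of $I R_\p$. In our one-dimensional domain the only contributing primes are $\brc{0}$---giving $1+1=2$---and the maximal ideals $\m$, each giving at most $\rk R_\m + 0 \le \lrk R$. Hence $I$ can be generated by at most $\max\brc{2,\lrk R} = \lrk R$ elements, by the first step. Taking the supremum over all ideals $I$ yields $\rk R \le \lrk R$.

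I expect the critical point to be the first step: without non-normality there is no obstruction to $\lrk R = 1$, and the global rank genuinely jumps, as non-principal Dedekind domains demonstrate. Non-normality is precisely what forces a local rank of at least $2$, absorbing the additive constant in the Forster--Swan estimate and collapsing $\rk R \le \lrk R + 1$ down to the desired equality.
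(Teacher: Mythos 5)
Your argument is correct, but note that this paper never proves Theorem \ref{motivation} itself --- it is imported verbatim from Clark --- so there is no internal proof to measure against; the fair comparison is with the machinery the paper does set up. Seen that way, your proof is essentially the paper's toolkit specialized to domains: your step 2 does not even need the module-level Forster--Swan bound with the support restriction, because the ring-level version already quoted as Theorem \ref{cor from Swan} suffices. Indeed, for a one-dimensional domain the prime $(0)$ contributes $\rk R_{(0)}+\dim R/(0)=1+1=2$ (the localization at $(0)$ is the fraction field, whose rank is $1$), while each maximal ideal contributes $\rk R_\m+0\le\lrk R$, so $\rk R\le\max\brc{2,\lrk R}$; this is exactly the computation hiding inside Proposition \ref{upper bound for rank}. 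Your step 1 is the genuinely new input, and it is sound: integral closure commutes with localization, so non-normality is detected at some maximal ideal $\m$; every maximal ideal of a one-dimensional domain that is not a field has height one, so $R_\m$ is a one-dimensional Noetherian local domain; and such a ring with principal maximal ideal is a DVR, hence normal --- therefore $\mu\brc{\m R_\m}\ge 2$ and $\lrk R\ge 2$, which absorbs the additive constant and yields $\rk R\le\lrk R$, with equality by Proposition \ref{upper bound for rank}. Two small points to make explicit in a final write-up: the height-one observation above (needed so that $R_\m$ really is one-dimensional rather than a field), and a precise reference for the Forster--Swan statement you invoke if you keep the module-level form rather than falling back on Theorem \ref{cor from Swan}.
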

This result is the motivation to this paper.
The purpose of this paper is to describe one dimensional rings of local rank which are not domains.
% In Section \ref{prelims} we provide preliminary results.
The main result of this paper is the following.

\begin{theorem}\label{main result}
    For a Noetherian ring $R$, the following statements are equivalent.
    \begin{enumerate}
        \item\label{first} The ring $R$ is not of local rank, i.e. $\rk R=\lrk R+1$.
        \item\label{decomp} The ring $R$ is a direct product of finite number of local principal ideal Artinian rings and Dedekind domains with at least one non-principal ideal ring.
    \end{enumerate}
\end{theorem}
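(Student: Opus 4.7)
The easy direction (\ref{decomp}) $\Rightarrow$ (\ref{first}) is a direct computation in a finite direct product $R = R_1 \times \cdots \times R_n$: ideals of $R$ split as products of ideals in the factors, so $\rk R = \max_i \rk R_i$; and maximal ideals of $R$ pull back from the factors, so $\lrk R = \max_i \lrk R_i$. A local principal Artinian ring and a principal Dedekind domain each satisfy $\rk = \lrk = 1$, while a non-principal Dedekind domain has $\rk = 2$ and $\lrk = 1$. Under (\ref{decomp}), the presence of at least one non-principal Dedekind factor therefore gives $\rk R = 2 = \lrk R + 1$.

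For (\ref{first}) $\Rightarrow$ (\ref{decomp}), the plan is first to decompose $\Spec R$ into its (finitely many) connected components, so that $R = R_1 \times \cdots \times R_n$ with each $\Spec R_i$ connected. Setting $k = \lrk R$, the hypothesis $\rk R = k+1$ together with Proposition \ref{upper bound for rank} applied to each factor forces some $R_i$ to satisfy $\rk R_i = k+1$ and $\lrk R_i = k$, while every other $R_j$ satisfies $\rk R_j \leq k+1$ and $\lrk R_j \leq k$. The whole problem thus reduces to classifying the connected Noetherian rings that can occur as factors.

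The central structural claim I would aim to establish is: \emph{a connected Noetherian ring $S$ that is not of local rank is a non-principal Dedekind domain}. I would prove this by a chain of reductions. First, $\dim S = 1$: this is forced by Propositions \ref{Artinian is locally ranked} and \ref{dimension 2 is not finite rank}. Second, $S$ is reduced: if $\nil S \neq 0$, a Nakayama-type lifting argument using $\nil S \subseteq \Jac S$ should relate $\rk S$ to $\rk(S/\nil S)$ and reduce to the reduced case. Third, $S$ is a domain: in a reduced connected $1$-dimensional Noetherian ring with two or more minimal primes, two of them must lie in a common maximal ideal (else $\Spec S$ would split as a disjoint union of closed supports), and a local-to-global analysis at such a maximal ideal should force $\rk S = \lrk S$. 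Fourth, by Clark's Theorem \ref{motivation}, a $1$-dimensional domain not of local rank is normal, hence Dedekind; and it cannot be a PID since principal ideal domains have $\rk = \lrk = 1$.

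Granted the structural claim, the distinguished factor $R_i$ is a non-principal Dedekind domain, so $k = \lrk R_i = 1$. Each remaining factor $R_j$ is then a connected Noetherian ring with $\lrk R_j \leq 1$ and $\rk R_j \leq 2$, so every localization of $R_j$ at a maximal ideal is a principal ideal ring; a parallel but easier pass through the same reductions shows $R_j$ is either a local principal Artinian ring (if $\dim R_j = 0$) or a Dedekind domain (if $\dim R_j = 1$). I expect the main obstacle to lie in the second and third steps of the structural claim, which are genuine extensions of Clark's theorem beyond the domain case: they presumably require constructing global generators of ideals from their local counterparts in the presence of nilpotents or of several minimal primes, which is precisely where the domain hypothesis in \cite{Clark-2017} is used.
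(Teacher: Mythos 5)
Your direction (\ref{decomp})$\Rightarrow$(\ref{first}) is fine and matches the paper, and the first reduction of (\ref{first})$\Rightarrow$(\ref{decomp}) to connected factors is legitimate ($\Spec R$ has finitely many connected components, and both $\rk$ and $\lrk$ are computed factorwise). But the argument stops exactly where the theorem lives: your ``central structural claim'' that a connected ring of non-local rank is a non-principal Dedekind domain is left resting on two steps --- eliminating nilpotents, and eliminating two minimal primes inside a common maximal ideal --- for which you offer only ``a Nakayama-type lifting argument should relate $\rk S$ to $\rk(S/\nil S)$'' and ``a local-to-global analysis should force $\rk S=\lrk S$''. Neither sketch works as stated. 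The rank does not pass usefully through the nilradical: one only has $\rk(S/\nil S)\le\rk S$, and the inequality can be strict (for a PID $D$, the ring $D[\eps]/(\eps^2)$ has rank $2$ while $\rk D=1$), so neither ``$S$ is of non-local rank'' nor its negation transfers between $S$ and $S/\nil S$ by Nakayama alone. Likewise, knowing that two minimal primes lie in a common maximal ideal $\m$ only tells you that $S_\m$ is a one-dimensional local ring which is not a domain; converting that into ``the rank of $S$ is attained at some maximal localization'' is precisely the global-generation problem where Clark's proof uses the domain hypothesis. In effect you have reduced the theorem to two statements essentially as hard as the theorem itself, and you say so yourself.

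The missing idea is the paper's key local input, Lemma \ref{ideal of height 0}: in a one-dimensional local ring, any ideal $I$ of height zero satisfies $\mu(I)<\rk R$ (proved by Artin--Rees plus the multiplicity of $R/I$, following Sally). Combined with Swan's bound (Theorem \ref{cor from Swan}), which for a ring of non-local rank produces a non-maximal prime $\q$ with $\rk R_\q=\rk R-1$, this lemma shows every ideal contained in $\q R_\q$ needs fewer than $\rk R_\q$ generators, forcing $\rk R_\q=1$, hence $\rk R=2$ and $\rk R_\p=1$ for every prime $\p$ (Lemma \ref{1 to 2}). That single computation is what simultaneously kills the nilpotents at height-one maximal ideals (those localizations are DVRs) and makes the minimal primes pairwise comaximal, after which the Chinese remainder theorem applied to $\p_1^N\cdots\p_n^N=0$ yields the product decomposition directly (Theorem \ref{main lemma}); no connected-component decomposition or separate reduced/domain reductions are needed. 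If you prefer to keep your outline, you still need this lemma, or an equivalent substitute, to carry out your second and third steps; without it the proposal is a plan rather than a proof.
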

We prove this Theorem in Section \ref{proof of main result}.
Moreover, we prove that the decomposition of a ring of non-local rank into a direct product of the form (\ref{decomp}) is uniquely determined up to isomorphism by $\Min R$ (Theorem \ref{main lemma}).
As an auxiliary result, using ideas \cite[Proof of Theorem 3.1.1]{Sally-1978} from Sally's book, we show that the rank of a local ring of dimension one cannot be realized on an ideal of height zero (Lemma \ref{ideal of height 0}).

In Subsection \ref{reduced rings} we generalize Theorem \ref{motivation} to the case of reduced rings.
Clark showed \cite[Theorem 4.4]{Clark-2017} that the rank of a ring of polynomials $A[t]$ is finite if and only if $A$ is an Artinian ring.
It is also shown in that paper that in this case the rank of $A[t]$ does not exceed the length of the ring $A$ \cite[Theorem 4.5]{Clark-2017}.
As an application of Theorem \ref{main result}, in Subsection \ref{polynomial rings} we show that $A[t]$ is of local rank, i.e.
$$
\rk A[t]=\lrk A[t],\qquad\text{for any Artinian ring }A.
$$

\subsection*{Notation}
All rings are assumed to be associative, commutative, unitial and Noetherian.
By $\Min R$ and $\Max R$ we denote sets of all minimal prime ideals and, respectively, of all maximal ideals of a ring $R$.
Since $R$ is Noetherian, $\Min R$ is a finite set.
For a module $M$ over a ring $R$, we denote by $\mu_R(M)=\mu(M)$ the minimal number of generators of $M$ as an $R$-module.
Note that $\rk R=\sup\brs{\mu\brc{I}\mid I\ss R}$.
% For any $\p\in\Spec{R}$, let $\mu_\p(M)$ be the minimal number of generators of $M_{\p}$ as an $R_\p$-module.

\subsection*{Preliminaries}
We start with the basic properties of functions $\mu$ and $\rk$.

\begin{proposition}[{\cite[Remark 1.2]{Clark-2017}}]\label{main prop on mu}
    Let $f\cn R\to A$ be a ring homomorphism, $M$ be an $R$-module and $f_*(M)$ be the $A$-module $M\otimes_RA$. Then
    \begin{enumerate}
        \item\label{generators of localization} $\mu(f_*(M))\le\mu(M)$; for example, this holds if $f$ is a quotient or localization map;
        \item if $I$ is an ideal in $R$, then $\rk(R/I)\le\rk(R)$;
        \item\label{rank of localization} for any mutiplicative set $S$ in $R$ we have $\rk(S^{-1}R)\le\rk(R)$.
    \end{enumerate}
\end{proposition}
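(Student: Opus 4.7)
The plan is to prove the three items in order, reducing (2) to (1) via the quotient map and handling (3) by lifting ideals along the localization map.

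For (1), the strategy is the standard base-change argument. Suppose $m_1,\ldots,m_n\in M$ generate $M$ as an $R$-module with $n=\mu(M)$. A general element of $M\otimes_R A$ is a finite sum $\sum_j m_j'\otimes a_j$; writing each $m_j'=\sum_i r_{ij}m_i$ and using the $R$-bilinearity of the tensor product, every such element becomes an $A$-linear combination of the elements $m_i\otimes 1$. Hence $m_1\otimes 1,\ldots,m_n\otimes 1$ generate $f_*(M)$, giving $\mu(f_*(M))\le n$. The special cases of quotient and localization maps just amount to noting that $M\otimes_R (R/I)\cong M/IM$ and $M\otimes_R S^{-1}R\cong S^{-1}M$.

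For (2), let $\pi\cn R\to R/I$ be the projection and let $J\ss R/I$ be an arbitrary ideal. Its preimage $J'\coloneqq\pi^{-1}(J)$ is an ideal of $R$ containing $I$, and $\pi$ restricts to a surjective $R$-module homomorphism $J'\twoheadrightarrow J$; in particular $J\cong \pi_*(J')$. Applying (1) to this restriction yields $\mu_{R/I}(J)\le \mu_R(J')\le \rk R$. Taking the supremum over all ideals $J$ of $R/I$ gives $\rk(R/I)\le\rk R$.

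For (3), let $\iota\cn R\to S^{-1}R$ be the localization map. Given an ideal $\mathcal{J}\ss S^{-1}R$, form the contraction $J\coloneqq \iota^{-1}(\mathcal{J})\ss R$. A direct check shows $S^{-1}J=\mathcal{J}$: the inclusion $\supseteq$ is immediate, and if $r/s\in\mathcal{J}$ then $r/1=s\cdot(r/s)\in\mathcal{J}$, so $r\in J$ and $r/s\in S^{-1}J$. Thus any set of generators $x_1,\ldots,x_n$ of $J$ over $R$ yields generators $x_1/1,\ldots,x_n/1$ of $\mathcal{J}$ over $S^{-1}R$ (again by (1) applied to $\iota$). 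Hence $\mu(\mathcal{J})\le\mu(J)\le\rk R$, and taking the supremum proves $\rk(S^{-1}R)\le\rk R$.

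No serious obstacle is expected: the whole argument is an unwinding of the definitions of tensor product, quotient, and localization of modules. The only mild subtlety is recognizing in (2) and (3) that one should lift each ideal of the target ring to an ideal of $R$ of no larger number of generators, which is precisely what (1) provides.
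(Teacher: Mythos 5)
Your proof is correct, and it is the standard unwinding of the definitions; the paper gives no argument of its own here (it simply cites Clark's Remark 1.2), and your route is exactly the expected one. One small inaccuracy in item (2): the surjection $J'\twoheadrightarrow J$ does not yield $J\cong\pi_*(J')=J'/IJ'$ in general (take $R=\Z$, $I=(4)$, $J'=(2)$: then $J'/IJ'\cong\Z/4\Z$ while $J\cong\Z/2\Z$); what it does yield is that $J$ is a quotient of $\pi_*(J')$, which still gives $\mu_{R/I}(J)\le\mu_R(J')$, so the argument goes through unchanged.
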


\begin{proposition}[{\cite[Theorem 4.1]{Clark-2017}}]\label{rank of finite direct product}
    Let $R=\Prod_{i=1}^nR_i$ be a direct product of finite number of rings. Then
    $
    \rk R=\max\limits_{i\in\brs{1,\ldots,n}} \rk R_i.
    $
\end{proposition}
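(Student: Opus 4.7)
The plan is to reduce everything to the statement $\mu_R(I) = \max_i \mu_{R_i}(I_i)$ for an ideal $I \ss R$ with product decomposition $I = \prod_{i=1}^n I_i$, and then take the supremum over ideals.

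First I would recall the standard fact that in a finite direct product $R = \prod_{i=1}^n R_i$ every ideal $I \ss R$ decomposes uniquely as $I = \prod_{i=1}^n I_i$, where $I_i = \pi_i(I)$ and $\pi_i \cn R \to R_i$ is the projection; this is immediate by looking at the orthogonal idempotents $e_i$ corresponding to the components and writing $I = \sum_{i=1}^n e_i I$. Under this decomposition, $R$-module generators of $I$ correspond to tuples of $R_i$-module generators of the factors $I_i$.

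Next I would prove the key equality $\mu_R\brc{\prod_{i=1}^n I_i} = \max_{i} \mu_{R_i}(I_i)$. The inequality $\mu_R(I) \ge \mu_{R_i}(I_i)$ follows from Proposition \ref{main prop on mu}(\ref{generators of localization}) applied to the projection $\pi_i\cn R\to R_i$, since $I \otimes_R R_i \cong I_i$. For the reverse inequality, set $m \coloneqq \max_i \mu_{R_i}(I_i)$ and for each $i$ pick generators $a_{i,1},\ldots,a_{i,m}$ of $I_i$ over $R_i$, padding with zeros where necessary. Then the $m$ elements $\brc{a_{1,j},\ldots,a_{n,j}} \in R$, for $j=1,\ldots,m$, generate $I$ as an $R$-module, because their component-wise projections already generate each $I_i$. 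Hence $\mu_R(I) \le m$.

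Taking the supremum over all ideals $I$ of $R$, and using that such ideals are in bijection with tuples $(I_1,\ldots,I_n)$ of ideals $I_i \ss R_i$, we get
$$
\rk R = \sup_{I \ss R} \mu_R(I) = \sup_{(I_1,\ldots,I_n)} \max_i \mu_{R_i}(I_i) = \max_i \sup_{I_i \ss R_i} \mu_{R_i}(I_i) = \max_i \rk R_i,
$$
where in the third equality we interchange the finite $\max$ with the suprema, which is valid because the $I_i$ vary independently. There is no real obstacle here: the only point requiring a moment of care is the zero-padding argument for the upper bound, which uses that the ambient ring has enough idempotents to realize such tuples as genuine $R$-linear combinations.
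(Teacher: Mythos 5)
Your proof is correct and is essentially the standard argument: the paper itself gives no proof, citing Clark's Theorem 4.1, and your decomposition of ideals via the orthogonal idempotents together with the zero-padding bound $\mu_R\brc{\prod_i I_i}=\max_i\mu_{R_i}(I_i)$ is exactly the expected route to that result. Nothing is missing; the interchange of the finite maximum with the suprema is harmless since the ideals $I_i$ vary independently.
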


We will need the following corollary of \cite[Theorem 2]{Swan}.

\begin{theorem}[\cite{Swan}]\label{cor from Swan}
    For a ring $R$ we have $\rk R\le\Sup_{\p\in\Spec R}\brc{\rk R_\p+\dim R/\p}$.
\end{theorem}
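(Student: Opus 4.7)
The plan is to unpack the definition of $\rk R$ and apply Swan's generator bound ideal by ideal. Since $R$ is Noetherian, every ideal $I\ss R$ is finitely generated as an $R$-module, so I would invoke Swan's Theorem 2 in the form
$$
\mu_R(M)\le\Sup_{\p\in\Spec R}\brc{\mu_{R_\p}(M_\p)+\dim R/\p}
$$
valid for any finitely generated $R$-module $M$, applied to $M=I$.

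Next, for a fixed prime $\p$, I would observe that $I_\p=I\otimes_R R_\p$ coincides with the extended ideal $IR_\p\ss R_\p$, so in particular $I_\p$ is an ideal of $R_\p$. Consequently $\mu_{R_\p}(I_\p)\le\rk R_\p$ by the very definition of the rank of a ring, and substituting this bound termwise into Swan's inequality yields
$$
\mu(I)\le\Sup_{\p\in\Spec R}\brc{\rk R_\p+\dim R/\p}.
$$
Finally I would take the supremum over all ideals $I\ss R$; by the identity $\rk R=\sup\brs{\mu(I)\mid I\ss R}$ recorded in the notation section, this gives precisely the claim.

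The expected main obstacle is minor: it is purely a matter of citing Swan's theorem in exactly the form above. If the original statement in \cite{Swan} is phrased over the support $\Supp M$ rather than all of $\Spec R$, one simply enlarges the indexing set, which can only weaken the bound and thus preserves the inequality. No further content is required beyond this bookkeeping, which is why the statement is labelled a corollary rather than a theorem.
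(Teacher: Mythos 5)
Your proposal is correct and coincides with what the paper does implicitly: the paper gives no proof, citing Swan's Theorem 2, and your argument (apply the Forster--Swan bound to each ideal $I$ as a finitely generated module, note $I_\p\cong IR_\p$ is an ideal of $R_\p$ so $\mu_{R_\p}(I_\p)\le\rk R_\p$, then take the supremum over ideals) is exactly the intended routine reduction. Your remark about enlarging the indexing set from $\operatorname{Supp}M$ (or Swan's $j$-spectrum, whose dimension is bounded by $\dim R/\p$) to all of $\Spec R$ correctly disposes of the only bookkeeping issue.
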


\begin{proposition}[{\cite[Chapter 1, 2.1]{Sally-1978}}]\label{multiplicity of one dimensional ring}
    Let $R$ be a local ring with maximal ideal $\m$ and $\dim R=1$. Then $e(R)=\dim_{R/\m}\m^n/\m^{n+1}$ is a positive constant for large enough $n$ and is called multiplicity of $R$.
\end{proposition}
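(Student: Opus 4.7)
The plan is to apply the Hilbert polynomial machinery to the associated graded ring. Set $G = \bigoplus_{n \ge 0} \m^n/\m^{n+1}$; since $R$ is Noetherian, $\m$ is finitely generated, and so $G$ is a finitely generated graded algebra over the field $R/\m$, generated in degree one by the images of any finite generating set of $\m$. The function $H(n) = \dim_{R/\m}(\m^n/\m^{n+1})$ from the statement is precisely the Hilbert function of $G$ regarded as a graded module over itself.

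By the standard theorem on Hilbert polynomials for finitely generated graded modules over a polynomial ring (obtained after presenting $G$ as a quotient of $(R/\m)[x_1,\ldots,x_r]$, where the $x_i$ correspond to generators of $\m$), $H(n)$ agrees for $n \gg 0$ with a polynomial $P(n) \in \Q[n]$, and the degree of $P$ equals $\dim G - 1$. The crucial input I would invoke at this step is the classical identity $\dim G = \dim R$ for a Noetherian local ring, a consequence of the Artin--Rees lemma together with the characterization of Krull dimension via systems of parameters. Combined with $\dim R = 1$, we conclude that $P$ has degree zero, so $H(n)$ stabilizes to a nonnegative integer constant for $n$ large, which we define to be $e(R)$.

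For positivity, I would argue by contradiction using Nakayama's lemma. If $e(R) = 0$, then $\m^n = \m^{n+1}$ for some large $n$, i.e.\ $\m^n = \m \cdot \m^n$; since $\m$ is the Jacobson radical of the local ring $R$, Nakayama forces $\m^n = 0$. But then $R$ is Artinian, contradicting $\dim R = 1$. Hence the stable value $e(R)$ is at least one.

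The main obstacle is the dimension identity $\dim \operatorname{gr}_\m R = \dim R$, whose self-contained proof requires genuine commutative-algebra machinery; in the context of this paper one would simply cite it from a standard reference (e.g.\ Matsumura) rather than reprove it. Everything else reduces to the Hilbert polynomial theorem and a one-line Nakayama argument.
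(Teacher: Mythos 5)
Your argument is correct: this Proposition is not proved in the paper but is quoted from Sally's book, and your route via the Hilbert function of the associated graded ring $\operatorname{gr}_\m R$ (Hilbert--Serre polynomiality, $\dim\operatorname{gr}_\m R=\dim R$, and Nakayama for positivity) is exactly the standard textbook proof of that cited fact. No gaps; citing the identity $\dim\operatorname{gr}_\m R=\dim R$ from a standard reference is entirely appropriate here.
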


\begin{theorem}[{\cite[Chapter 3, Theorem 1.2]{Sally-1978}}]\label{rings of dim at most 1}
    If $R$ is local, then $\dim R\le 1$ if and only if $\rk R<\infty$.
\end{theorem}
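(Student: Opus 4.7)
\emph{Proof proposal.} I would prove the two implications separately.

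For $\rk R<\infty\Rightarrow\dim R\le 1$, argue contrapositively. If $\dim R\ge 2$, the Hilbert--Samuel function $H(n)=\dim_{R/\m}\brc{\m^n/\m^{n+1}}$ eventually agrees with a polynomial in $n$ of degree $\dim R-1\ge 1$, and in particular is unbounded as $n\to\infty$. Since $R$ is local Noetherian, Nakayama's lemma gives $\mu(\m^n)=\dim_{R/\m}\brc{\m^n/\m^{n+1}}$, so $\mu(\m^n)\to\infty$ and hence $\rk R=\infty$.

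For $\dim R\le 1\Rightarrow\rk R<\infty$, the case $\dim R=0$ is immediate: $R$ is Artinian local of some finite length $\ell$, and for every ideal $I\ss R$ one has $\mu(I)\le\ell_R(I/\m I)\le\ell_R(I)\le\ell$, whence $\rk R\le\ell$. For the case $\dim R=1$, I would follow Sally's strategy. The key ingredients are: by Proposition \ref{multiplicity of one dimensional ring}, $\mu(\m^n)=e(R)$ for $n\gg 0$; in a one-dimensional Noetherian local ring one may choose a superficial element $x\in\m$ generating a reduction of $\m$, so that $\m^{n+1}=x\m^n$ for large $n$; and $\Spec R$ consists only of $\m$ together with the finitely many minimal primes $\p_1,\ldots,\p_k$, each localization $R_{\p_i}$ being Artinian. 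Combining these yields, for $\m$-primary $I$, a uniform bound of the shape $\mu(I)\le e(R)+\mu(\m)-1$ via an analysis of $I/xI$ and the $\m$-adic filtration. For $I$ not $\m$-primary, one has $I\ss\p_i$ for some $i$, and one controls $\mu(I)$ through the primary decomposition of $I$ together with the bounds $\mu_{R_{\p_i}}(I_{\p_i})\le\ell_R(R_{\p_i})$.

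The principal obstacle is the one-dimensional case. While the $\m$-adic Hilbert function stabilizes, extending uniform control from the specific family $\{\m^n\}$ to an arbitrary ideal $I$ requires the technology of reduction ideals and superficial elements developed in Chapter~3 of Sally's monograph. When $R$ carries nilpotents, $\m$ need not contain a nonzerodivisor, which obstructs naive reductions and forces the use of primary decomposition combined with localization at the finitely many minimal primes as indicated above.
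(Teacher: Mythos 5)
Two preliminary remarks: the paper itself does not prove this statement---it is imported from Sally's book as a black box---so the only internal text to measure your attempt against is Lemma \ref{ideal of height 0}, which reuses the relevant mechanism. Your easy direction is correct and complete: if $\dim R\ge 2$, then $\mu\brc{\m^n}=\dim_{R/\m}\brc{\m^n/\m^{n+1}}$ is eventually a polynomial in $n$ of degree $\dim R-1\ge 1$, hence unbounded, so $\rk R=\infty$; and the zero-dimensional case via length is fine. The substance of the theorem is the implication $\dim R=1\Rightarrow\rk R<\infty$, and there your argument is a sketch whose weakest point is exactly the one the proof must carry.

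Concretely: for an ideal $I$ of height $0$ you propose to control $\mu_R\brc{I}$ by primary decomposition together with $\mu_{R_{\p_i}}\brc{I_{\p_i}}\le\ell\brc{R_{\p_i}}$. This does not give a bound: over a local ring, localizing at the minimal primes discards the $\m$-primary information, and there is no general inequality recovering $\mu_R\brc{I}$ from the numbers $\mu_{R_{\p_i}}\brc{I_{\p_i}}$ (a Forster--Swan type estimate is vacuous here, since $\m$ lies in the support and contributes the term $\mu_{R_\m}\brc{I_\m}=\mu\brc{I}$ itself). The standard repair---and the one the paper runs in Lemma \ref{ideal of height 0}---is Artin--Rees plus Nakayama: for $n\gg 0$ one has $I\cap\m^n\ss\m I$, whence $\mu\brc{I}\le\mu\brc{I+\m^n}$; since $I+\m^n$ is $\m$-primary, the whole theorem reduces to a uniform bound for $\m$-primary ideals and primary decomposition is never needed. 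Two further gaps in your $\m$-primary case: the existence of $x\in\m$ with $\m^{n+1}=x\m^n$ for $n\gg 0$ requires an infinite residue field (fixable by passing to $R[X]_{\m R[X]}$, which preserves multiplicity and minimal numbers of generators of extended ideals); and when $R$ is not Cohen--Macaulay---which for a one-dimensional local ring forces nilpotents and forces $x$ to be a zerodivisor---the length identity $\ell\brc{I/xI}=\ell\brc{R/xR}$ underlying the bound $\mu\brc{I}\le e(R)$ breaks down, so your asserted bound $e(R)+\mu\brc{\m}-1$ is unjustified; Sally handles this by passing to the quotient by the largest finite-length ideal and absorbing the error into its length. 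As written, the height-zero case and the non-Cohen--Macaulay $\m$-primary case are not proved.
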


The following statement relates the rank of a ring to its local rank.
Its proof depends essentially on Theorem \ref{cor from Swan}.

\begin{proposition}\label{upper bound for rank}
    For a ring $R$ it follows that
    $
    \lrk R\le\rk R\le\lrk R+1.
    $ In particular, $R$ is of local rank if $\rk R$ is infinite.
\end{proposition}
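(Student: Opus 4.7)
The plan is to prove the two inequalities separately. The lower bound $\lrk R \le \rk R$ is immediate from Proposition \ref{main prop on mu}(\ref{rank of localization}): each localization map $R \to R_\m$ at a maximal ideal yields $\rk R_\m \le \rk R$, and taking the supremum over $\m \in \Max R$ gives the claim.

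For the upper bound $\rk R \le \lrk R + 1$, the main ingredient is Theorem \ref{cor from Swan}, so the strategy is to establish the per-prime bound $\rk R_\p + \dim R/\p \le \lrk R + 1$ for every $\p \in \Spec R$. When $\lrk R = \infty$ the inequality is vacuous, so one may assume $\lrk R < \infty$. The decisive observation is that in this regime $\rk R_\m \le \lrk R < \infty$ for every maximal $\m$, so Theorem \ref{rings of dim at most 1} forces $\dim R_\m \le 1$, and hence $\dim R \le 1$. This collapses $\dim R/\p$ to $0$ or $1$ for every prime $\p$, which is what makes the additive "$+1$" appearing in the statement exactly sufficient; I expect this dimension-reduction step to be the conceptual hinge of the proof.

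With $\dim R \le 1$ in hand, the per-prime estimate splits cleanly into two cases. If $\p$ is maximal, then $\dim R/\p = 0$ and $\rk R_\p \le \lrk R$ by definition, so the sum is at most $\lrk R$. If $\p$ is not maximal, then it is minimal, $\dim R/\p = 1$, and choosing any maximal $\m \supseteq \p$ the further localization $R_\m \to R_\p$ yields $\rk R_\p \le \rk R_\m \le \lrk R$ by Proposition \ref{main prop on mu}(\ref{rank of localization}), so the sum is at most $\lrk R + 1$. Feeding this into Theorem \ref{cor from Swan} completes the upper bound. The \emph{in particular} clause is then automatic: if $\rk R = \infty$, then $\lrk R$ cannot be finite, for otherwise $\rk R \le \lrk R + 1 < \infty$, whence $\lrk R = \rk R$.
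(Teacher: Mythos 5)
Your proof is correct and follows essentially the same route as the paper: the lower bound via Proposition \ref{main prop on mu}(\ref{rank of localization}), and the upper bound via Theorem \ref{cor from Swan} after using Theorem \ref{rings of dim at most 1} to force $\dim R\le 1$. The only cosmetic difference is that you split cases by whether $\lrk R$ is finite, whereas the paper splits by whether $\dim R\ge 2$; the ingredients and the final estimate are the same.
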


\begin{proof}
    The first inequality follows from \textit{(\ref{rank of localization})}.
    For the second one,
    assume firstly that $\dim R\ge 2$.
    Then $\dim R_\n\ge 2$ for some $\n\in\Max R$.
    Since $R$ is Noetherian, it follows from Theorem \ref{rings of dim at most 1} that $\rk R_\n$ is countably infinite.
    Again since $R$ is Noetherian, from the first inequality we see that $\rk R$ is also countably infinite.
    Hence the second inequality holds and is an equality in fact.
    Now assume that $\dim R\le 1$.
    Thus
    \[
    \rk R\stackrel{\ref{cor from Swan}}{\le}\Sup_{\p\in\Spec R}\brc{\rk R_\p+\dim R/\p}\stackrel{\textit{(\ref{rank of localization})}}{\le}\Sup_{\m\in\Max R}\brc{\rk R_\m+1}\le\lrk R+1.\qedhere
    \]
\end{proof}

\begin{proposition}\label{Artinian is locally ranked}
    If $\dim R=0$, then $\rk R<\infty$ and $R$ is of local rank.
\end{proposition}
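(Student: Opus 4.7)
The plan is to reduce to the local case via the structure theorem for Artinian rings. Since $R$ is Noetherian with $\dim R=0$, it is Artinian, and hence decomposes as a finite direct product $R\cong\Prod_{i=1}^n R_i$ where each $R_i$ is a local Artinian ring. By Theorem \ref{rings of dim at most 1}, each factor $R_i$ has $\dim R_i=0\le 1$, so $\rk R_i<\infty$. Proposition \ref{rank of finite direct product} then gives $\rk R=\max_i\rk R_i<\infty$, establishing the first claim.

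For the second claim, the maximal ideals of a product decompose as $\m_j=R_1\ti\cdots\ti\m_{R_j}\ti\cdots\ti R_n$ for $j\in\brs{1,\ldots,n}$, where $\m_{R_j}$ denotes the unique maximal ideal of $R_j$. The essential step is the identification $R_{\m_j}\cong R_j$. Indeed, the idempotent $e_j=\brc{0,\ldots,0,1,0,\ldots,0}$ with $1$ in the $j$-th slot is a unit outside $\m_j$, and the identity $e_j e_i=0$ for $i\ne j$ forces every element of $R_i$ (for $i\ne j$) to become zero in $R_{\m_j}$. Hence the localization map factors through the projection $R\to R_j$, and since $R_j$ is already local with maximal ideal $\m_{R_j}$, no further localization is needed.

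Combining these identifications with Proposition \ref{rank of finite direct product},
\[
\lrk R=\Sup_{\m\in\Max R}\rk R_\m=\max_{j\in\brs{1,\ldots,n}}\rk R_j=\rk R,
\]
so $R$ is of local rank. There is no substantive obstacle here: the argument is routine given the Artinian decomposition and Theorem \ref{rings of dim at most 1}; the only technical point worth writing out carefully is the isomorphism $R_{\m_j}\cong R_j$, which follows from the killing of off-diagonal idempotents in the localization.
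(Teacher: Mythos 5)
Your proof is correct, and the finiteness part coincides with the paper's: decompose the Artinian ring into a finite product of local Artinian rings and combine Proposition \ref{rank of finite direct product} with Theorem \ref{rings of dim at most 1}. Where you diverge is the local-rank part. The paper gets $\rk R\le\lrk R$ in one line from Swan's bound (Theorem \ref{cor from Swan}): since $\dim R=0$, every prime is maximal and $\dim R/\p=0$, so $\rk R\le\Sup_{\m\in\Max R}\rk R_\m$, and then Proposition \ref{upper bound for rank} gives equality. You instead argue structurally: the maximal ideals of $R\cong\Prod_{i=1}^nR_i$ are $\m_j=R_1\ti\cdots\ti\m_{R_j}\ti\cdots\ti R_n$, and the idempotent computation showing $R_{\m_j}\cong R_j$ is correct (the $e_i$ with $i\ne j$ are killed because $e_j$ becomes a unit and $e_ie_j=0$, and $R_j$ is already local, so no further localization occurs), whence $\lrk R=\max_j\rk R_j=\rk R$ by the product formula again. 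Your route is more elementary and self-contained --- it never uses the Swan-type estimate, only the structure theorem for Artinian rings and the behaviour of rank under finite products --- and it in fact reproves the semi-local-type statement directly in dimension zero; the paper's route is shorter given that Theorem \ref{cor from Swan} is already available and is the same tool used elsewhere (e.g.\ in Proposition \ref{upper bound for rank}), so it keeps the argument uniform. Both proofs are complete.
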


\begin{proof}
    Since the Artinian ring $R$ is a direct product of finite number of local Artinian rings, Proposition \ref{rank of finite direct product} and Theorem \ref{rings of dim at most 1} imply that $R$ is of finite rank.
    From $\dim R=0$ it follows that $\Spec R=\Max R$ and $\dim R/\p=0$ for any prime $\p\ss R$.
    Hence Theorem \ref{cor from Swan} implies $\rk R\le\lrk R$.
    Thus the result follows from Proposition \ref{upper bound for rank}.
\end{proof}

\begin{proposition}\label{dimension 2 is not finite rank}
    If $\dim R\ge 2$, then $\rk R=\infty$ and $R$ is of local rank.
\end{proposition}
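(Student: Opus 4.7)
The plan is to reduce to a localization of dimension at least two and then invoke Theorem \ref{rings of dim at most 1} together with Proposition \ref{upper bound for rank}. Since $\dim R\ge 2$, there is a chain of prime ideals $\p_0\subsetneq\p_1\subsetneq\p_2$ in $R$; choosing any maximal ideal $\n$ containing $\p_2$ gives $\dim R_\n\ge 2$.

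With such an $\n$ in hand, I would apply Theorem \ref{rings of dim at most 1} in its contrapositive form to the local ring $R_\n$: since $\dim R_\n>1$, we must have $\rk R_\n=\infty$. By the definition of $\lrk$ as a supremum over maximal ideals, this forces $\lrk R=\infty$ as well, and then the first inequality of Proposition \ref{upper bound for rank} yields $\rk R\ge\lrk R=\infty$, hence $\rk R=\infty$.

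Finally, local rank follows immediately from the last assertion of Proposition \ref{upper bound for rank}, which says that any ring of infinite rank is of local rank. (Alternatively, one can note directly that if $\rk R$ is infinite then the inequality $\rk R\le\lrk R+1$ forces $\lrk R$ to be infinite as well, and since both cardinals are at most $\aleph_0$ when $R$ is Noetherian, they must coincide.)

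No step here presents a genuine obstacle; the entire argument is a short combination of the cited results. The only minor subtlety is the observation that $\rk R_\n=\infty$ really means countably infinite, since every ideal in a Noetherian ring is finitely generated, so that the cardinal arithmetic in the comparison $\lrk R\le\rk R\le\lrk R+1$ behaves as expected.
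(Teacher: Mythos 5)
Your proof is correct and takes essentially the same approach as the paper: both arguments rest on Theorem \ref{rings of dim at most 1} applied to localizations at maximal ideals together with Proposition \ref{upper bound for rank} for the local-rank conclusion. The only cosmetic difference is that you argue directly, exhibiting an $\n\in\Max R$ with $\dim R_\n\ge 2$ (exactly as the paper itself does inside the proof of Proposition \ref{upper bound for rank}), whereas the paper phrases this proposition as a contradiction argument starting from $\rk R<\infty$.
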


\begin{proof}
    Assume $\rk R=n<\infty$.
    Hence Proposition \ref{main prop on mu} implies $\rk R_\m\le n$ for any\linebreak$\m\in\Max R$. By Theorem \ref{rings of dim at most 1}, for any such $\m$ we have $\dim R_\m\le 1$. It follows that $\dim R\le 1$, a contradiction.
    Thus from Proposition \ref{upper bound for rank} it follows that $R$ is of local rank.
\end{proof}

\section{The Main Theorem}\label{proof of main result}

In this section we prove Theorem \ref{main result}.
It remains to treat the case of dimension $1$, which is the hardest one.
Remind that $\hth I=\min\brs{\hth \p\mid I\ss\p\in\Spec R}$ is the height of an ideal $I\ss R$.

\begin{lemma}\label{ideal of height 0}
    Let $I$ be an ideal in a local ring $R$ with $\dim R=1$.
    If $\hth I=0$, then $\mu(I)<\rk R$.
\end{lemma}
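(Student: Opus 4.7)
The plan is to exhibit an ideal $J \supseteq I$ with $\mu(J) \ge \mu(I) + 1$; since $\mu(J) \le \rk R$, this will yield $\mu(I) < \rk R$. Following the use of the $\m$-adic filtration in Sally's proof of \cite[Chapter~3, Theorem~1.2]{Sally-1978}, the natural candidate is $J := I + \m^n$ for $n$ sufficiently large, with the choice of $n$ dictated by Artin--Rees.

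First, I would apply the Artin--Rees lemma to the $\m$-adic filtration on $I$ to pick $n$ large enough that $I \cap \m^{n+1} \subseteq \m I$. This yields the key identity $I \cap \m J = \m I$ (noting $\m J = \m I + \m^{n+1}$): given $y \in I \cap (\m I + \m^{n+1})$, writing $y = w + z$ with $w \in \m I$ and $z \in \m^{n+1}$ forces $z = y - w \in I \cap \m^{n+1} \subseteq \m I$, so $y \in \m I$. Consequently the inclusion $I \hookrightarrow J$ descends to an injection of $k$-vector spaces $I/\m I \hookrightarrow J/\m J$ (where $k = R/\m$), and the induced short exact sequence
\[
0 \to I/\m I \to J/\m J \to J/(I + \m J) \to 0
\]
gives $\mu(J) = \mu(I) + \dim_k J/(I + \m J)$.

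It then remains to verify that the cokernel $J/(I + \m J)$ is nonzero. Since $I + \m J = I + \m^{n+1}$, the second isomorphism theorem identifies it with $\m^n/((I \cap \m^n) + \m^{n+1})$. If this quotient were zero, then Nakayama's lemma applied to the finitely generated $R$-module $\m^n/(I \cap \m^n)$ (whose image of $\m^{n+1}$ would exhaust it) would force $\m^n \subseteq I$; but then $I$ would be $\m$-primary, giving $\hth I = \hth \m = \dim R = 1$, contradicting the hypothesis $\hth I = 0$. Combining, $\mu(J) \ge \mu(I) + 1$, and since $\mu(J) \le \rk R$ the lemma follows.

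The main obstacle is the interplay of two ingredients that must work in tandem: the Artin--Rees stability of the $\m$-adic filtration on $I$ (which is what makes the injection $I/\m I \hookrightarrow J/\m J$ possible) and the height-zero hypothesis (which is precisely what prevents the top piece $J/(I + \m J)$ from collapsing). Neither ingredient alone suffices, and their combination is the crux of the argument.
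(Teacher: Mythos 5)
Your proof is correct and follows essentially the same route as the paper: both apply Artin--Rees to arrange $I\cap\m J=\m I$ for $J=I+\m^{n}$ with $n$ large, and both extract $\mu(J)=\mu(I)+\dim_{R/\m}J/(I+\m J)$ from the exact sequence $0\to I/\m I\to J/\m J\to J/(I+\m J)\to 0$ together with Nakayama. The only (minor, slightly more elementary) divergence is the final positivity step: the paper identifies $J/(I+\m J)$ with a power quotient $\overline{\m}^{\,n}/\overline{\m}^{\,n+1}$ of the maximal ideal of the one-dimensional ring $R/I$ and quotes $e(R/I)\ge 1$ via Proposition \ref{multiplicity of one dimensional ring}, whereas you obtain nonvanishing directly from Nakayama, observing that $J/(I+\m J)=0$ would force $\m^{n}\subseteq I$ and hence $\hth I=1$, contradicting $\hth I=0$.
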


\begin{proof}
    Let us consider $\m\in\Max R$.
    By Artin-Rees Lemma applied to $\m$-adic filtration of $R$, there is a positive integer $n_0$ such that $\m I\supseteq\m\brc{I\cap\m^{n-1}}=I\cap\m^{n}$ for all $n\ge n_0$.
    Therefore, for all $n\ge n_0$ there is a natural isomorphism $I/\m I\cong \brc{I+\m^n}/\m\brc{I+\m^{n-1}}$.
    
    The exact sequence
    $$
    0\to \brc{I+\m^{n}}/\m\brc{I+\m^{n-1}}\to\brc{I+\m^{n-1}}/\m\brc{I+\m^{n-1}}\to\brc{I+\m^{n-1}}/\brc{I+\m^{n}}\to 0
    $$
    of $R/\m$-modules and Nakayama's Lemma give
    \begin{align*}
        \rk R\ge \mu\brc{I+\m^n}&=\dim_{R/\m}\brc{I+\m^{n-1}}/\m\brc{I+\m^{n-1}}
        \\
        &=\dim_{R/\m}\brc{I+\m^{n}}/\m\brc{I+\m^{n-1}}+\dim_{R/\m} \brc{I+\m^{n-1}}/\brc{I+\m^{n}}.
    \end{align*}

    Thus,
    since $\dim R/I=1$, Proposition \ref{multiplicity of one dimensional ring} implies
    $$
    \rk R\ge\dim_{R/\m}I/I\m+e\brc{R/I}=\mu\brc{I}+e\brc{R/I},\qquad e\brc{R/I}\ge 1,
    $$
    for $n$ large enough.
    But $\rk R<\infty$, by Theorem \ref{rings of dim at most 1}. So the statement follows.
\end{proof}

\begin{lemma}\label{1 to 2}
    Let $R$ be a ring of non-local rank. Then $R$ has the following properties:
        \begin{enumerate}
            \item $\dim R=1$,
            \item $R_\p$ are fields for all $\p\in\Min R\sm\Max R$,
            \item $R_\m$ are DVR's for all $\m\in\Max R\sm\Min R$,
            \item $R_\m$ are principal ideal Artinian rings for all $\m\in\Min R\cap\Max R$,
            \item $\rk R=2$.
        \end{enumerate}
\end{lemma}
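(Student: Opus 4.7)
The plan is to establish the five properties in sequence, extracting the local structure from the global non-local-rank hypothesis using Theorem \ref{cor from Swan} and Lemma \ref{ideal of height 0}.

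First, property (1) is immediate from Propositions \ref{Artinian is locally ranked} and \ref{dimension 2 is not finite rank}, which show that rings of dimension different from one are of local rank. Proposition \ref{upper bound for rank} likewise gives $\rk R < \infty$, since infinite rank forces local rank. Write $n = \lrk R$, so $\rk R = n+1$; note $n \ge 1$ because $R$ is nonzero.

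The heart of the argument is to show $n \le 1$, which will give property (5). Since in dimension one every prime is minimal or maximal, Theorem \ref{cor from Swan} reads
\[
n + 1 = \rk R \le \max\brc{n,\, \Sup_{\p \in \Min R \sm \Max R}\brc{\rk R_\p + 1}}.
\]
Hence some $\p \in \Min R \sm \Max R$ satisfies $\rk R_\p \ge n$. Fix a maximal $\m \supsetneq \p$; applying Proposition \ref{main prop on mu}(\ref{rank of localization}) to the chain $R \to R_\m \to R_\p$ yields $n \ge \rk R_\m \ge \rk R_\p \ge n$, so $\rk R_\m = \rk R_\p = n$. Since $\p \subsetneq \m$, the ring $R_\m$ has dimension one and $\p R_\m$ is a minimal prime of $R_\m$. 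By Lemma \ref{ideal of height 0}, every $R_\m$-ideal contained in $\p R_\m$ has $\mu < n$. Proper ideals of $R_\p = (R_\m)_{\p R_\m}$ are in bijection with such $R_\m$-ideals, and $\mu$ does not increase under localization by Proposition \ref{main prop on mu}(\ref{generators of localization}); together with $\mu(R_\p) = 1$ this gives $\rk R_\p \le \max(1,\, n-1)$. Combined with $\rk R_\p = n$, this forces $n \le 1$, so $n = 1$ and $\rk R = 2$.

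With $\lrk R = 1$, every $R_\m$ is a local principal ideal ring, and the remaining properties follow by case analysis. If $\m \in \Max R \sm \Min R$, then $\dim R_\m = 1$, and a one-dimensional local Noetherian PIR is a DVR (by Krull's intersection theorem every nonzero ideal is a power of the maximal ideal, and no two nonzero such powers have zero product), giving (3). If $\m \in \Min R \cap \Max R$, then $R_\m$ is Artinian local with principal maximal ideal, hence a principal Artinian ring, giving (4). For $\p \in \Min R \sm \Max R$, pick any maximal $\m \supsetneq \p$; by (3), $R_\m$ is a DVR whose only primes are $(0)$ and $\m R_\m$, so $\p R_\m = (0)$, and $R_\p = (R_\m)_{(0)}$ is the fraction field of $R_\m$, giving (2).

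The main obstacle is the step $\rk R_\p \le n - 1$: once the pair $(\p,\m)$ is selected via Swan's bound, one must translate the height-zero bound of Lemma \ref{ideal of height 0} inside $R_\m$ into a bound on $\rk R_\p$ using the bijection between proper ideals of $R_\p$ and $R_\m$-ideals contained in $\p R_\m$. Once $\lrk R = 1$ is in hand, the remaining structure theory is classical.
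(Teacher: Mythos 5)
Your proof is correct and follows essentially the same route as the paper: Swan's bound (Theorem \ref{cor from Swan}) locates a non-maximal prime $\p$ whose localization has maximal rank, Lemma \ref{ideal of height 0} applied in $R_\m$ for a maximal $\m\supseteq\p$ bounds $\mu$ of the contractions of proper ideals of $R_\p$, forcing $\lrk R=1$ and $\rk R=2$, after which the local structure statements are classical. The only quibble is that proper ideals of $R_\p$ are not in bijection with the $R_\m$-ideals contained in $\p R_\m$ (contraction is injective but not surjective onto them); since you only use that each proper ideal of $R_\p$ is the extension of its contraction, which lies in $\p R_\m$, the argument is unaffected.
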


\begin{proof}
    From Corollary \ref{Artinian is locally ranked} and Proposition \ref{dimension 2 is not finite rank} we have $\dim R=1$.
    
    Notice that if $\rk R=\infty$, then $R$ is of local rank by Proposition \ref{upper bound for rank}, which contradicts the assumption.
    Hence $\rk R<\infty$, and Corollary \ref{cor from Swan} implies $\rk R\le \rk R_\q+\dim R/\q$, for some prime $\q\ss R$.
    Since $R$ is of non-local rank, Proposition \ref{main prop on mu} implies $\q\not\in\Max R$ and $\rk R_\q=\rk R-1$. 
    Consider some $\n\in\Max R$ with $\q\ss\n$. 
    From Lemma \ref{ideal of height 0} we see that $\mu\brc{I}<\rk R_\n$ for any ideal $I\ss\q R_\n$.
    Thus if we denote by $\lambda$ the localization map $R_\n\to R_\q$, then for any ideal $J\ss \q R_\q$ Proposition \ref{main prop on mu} gives
    \begin{equation}\label{*}
        \mu_{R_\q}\brc{J}\le\mu_{R_\n}\brc{\lambda^{-1}\brc{J}}<\rk R_\n\le\rk R-1=\rk R_\q.
    \end{equation}
    Since $\rk R_\q<\infty$, we see that $\rk R_\q=\mu_{R_\q}(J_0)$ for some ideal $J_0\ss R_\q$.
    And since $R_\q$ is local, it follows from \eqref{*} that any such $J_0$ should satisfy $J_0=R_\q$.
    Hence $\rk R=2$ and $\rk R_\p=1$ for all $\p\in\Spec R$.
    
    To sum up, if $\p\in\Min R$, then $R_\p$ is a principal ideal Artinian ring; if\linebreak$\p\in\Max R\sm\Min R$, then $R_\p$ is a regular local ring with $\dim R_\p=1$, hence is a DVR. Finally, for $\p\in\Min R\sm\Max R$ and $\p\ss\m\in\Max R\sm\Min R$ we conclude that $R_\p$ is the quotient field of $R_\m$.
\end{proof}

\begin{theorem}\label{main lemma}
    Let $R$ be a ring of non-local rank.
    Then $R$ decomposes uniquely (up to isomorphism) into a direct product of finite number of local principal ideal Artinian rings and Dedekind domains with at least one non-principal ideal ring.
    The Artinian components are $R_\p$ with $\p\in\Min R\cap\Max R$, while the Dedekind components are $R/\q$ with $\q\in\Min R\sm\Max R$.
\end{theorem}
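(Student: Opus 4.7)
The plan is to produce the decomposition by applying the Chinese Remainder Theorem to a minimal primary decomposition $(0)=\q_1\cap\cdots\cap\q_n$, with $\sqrt{\q_i}=\p_i$, and then identify each factor $R/\q_i$ using the local data from Lemma \ref{1 to 2}. I will first verify that no embedded primes occur: if $\p\in\operatorname{Ass} R$ were non-minimal, then, since $\dim R=1$, we would have $\p\in\Max R\sm\Min R$, hence $R_\p$ is a DVR with $\p R_\p\in\operatorname{Ass} R_\p=\brs{0}$, which would make $R_\p$ a field, a contradiction. Thus the $\p_i$ run exactly through $\Min R$.

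The crucial intermediate step is pairwise comaximality of the $\q_i$. Since $\sqrt{\q_i+\q_j}=\sqrt{\p_i+\p_j}$, it suffices to show that no maximal ideal contains two distinct minimal primes. Suppose $\p_i,\p_j\ss\m$ with $\p_i\neq\p_j$. If $\m\in\Min R$, minimality forces $\p_i=\m=\p_j$; otherwise $\m\in\Max R\sm\Min R$ and $R_\m$ is a DVR whose only primes are $0$ and $\m R_\m$, so $\p_iR_\m=\p_jR_\m=0$ (else $\p_i=\m$ or $\p_j=\m$), and contracting to $R$ yields $\p_i=\p_j=\Ker\brc{R\to R_\m}$. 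Either case contradicts $\p_i\neq\p_j$. Hence CRT produces an isomorphism $R\cong\prod_{i=1}^n R/\q_i$.

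It remains to identify the factors. For $\p_i\in\Min R\sm\Max R$, Lemma \ref{1 to 2} says $R_{\p_i}$ is a field, so both $\p_iR_{\p_i}$ and the $\p_iR_{\p_i}$-primary ideal $\q_iR_{\p_i}$ equal $0$, and a standard contraction argument gives $\q_i=\p_i$; the quotient $R/\p_i$ is then a one-dimensional Noetherian domain whose localization at every maximal ideal $\m/\p_i$ equals $R_\m/\p_iR_\m=R_\m$, a DVR, so $R/\p_i$ is Dedekind. For $\p_i\in\Min R\cap\Max R$, localizing $(0)=\bigcap_j\q_j$ at $\p_i$ annihilates every component with $j\neq i$ (since $\p_j\not\ss\p_i$ forces $\q_jR_{\p_i}=R_{\p_i}$), leaving $\q_iR_{\p_i}=0$; as $R/\q_i$ is local at $\p_i/\q_i$, it equals $R_{\p_i}/\q_iR_{\p_i}=R_{\p_i}$, a local principal ideal Artinian ring by Lemma \ref{1 to 2}. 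Existence of a non-principal Dedekind factor then follows from $\rk R=2$ and Proposition \ref{rank of finite direct product}, since any principal ideal ring factor has rank at most $1$. Uniqueness is built into the identification: in any decomposition of the stated form, the local Artinian factors are exactly the $R_\p$ for $\p\in\Min R\cap\Max R$, and the Dedekind factors are exactly the $R/\q$ for $\q\in\Min R\sm\Max R$. The main obstacle I anticipate is the comaximality step; once that is settled, the remaining identifications reduce to routine primary-decomposition bookkeeping.
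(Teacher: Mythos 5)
Your proof is correct and follows essentially the same route as the paper: pairwise comaximality of the minimal primes deduced from the local structure in Lemma \ref{1 to 2}, the Chinese Remainder Theorem, identification of the factors as $R_\p$ for $\p\in\Min R\cap\Max R$ and $R/\q$ for $\q\in\Min R\sm\Max R$, and the non-principal Dedekind factor forced by $\rk R=2$ via Proposition \ref{rank of finite direct product}. The only difference is technical rather than conceptual: the paper applies CRT to the ideals $\p_1^N,\ldots,\p_n^N$ with $\p_1^N\cdots\p_n^N\ss\brc{\nil R}^N=0$ (so no primary decomposition and no embedded-prime check is needed, and the Dedekind case is handled by showing $\q=\q^2$ locally), whereas you use a minimal primary decomposition of $(0)$, rule out embedded primes, and identify $\q_i=\p_i$ by contraction from the field $R_{\p_i}$.
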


\begin{proof}
    Let $\p_1,\ldots,\p_n$ be all minimal primes in $R$.
    First note that any $\p_i,\p_j$ are coprime, for otherwise $\p_i,\p_j\ss\m$ for some maximal $\m\ne\p_i,\p_j$, hence from Lemma \ref{1 to 2} we see that in the DVR $R_\m$ there are two nonmaximal primes, a contradiction.
    Since $R$ is Noetherian, there is a number $N$ with the property $\p_1^N\ldots\p_n^N\ss\brc{\nil R}^N=0$.
    Thus the Chinese remainder theorem implies $R\cong R/\p_1^N\ti\ldots\ti R/\p_n^N$.

    Let us consider $\p=\p_i\in\Min R\cap\Max R$.
    Hence Lemma \ref{1 to 2} implies that the rings $R/\p^N$ and $R_\p$ are local Artinian.
    Since $\p^NR_\p=\p_1^N\ldots\p_n^NR_\p=0$, it follows that $R/\p^N\cong\brc{R/\p^N}_\p\cong R_\p/\p^NR_\p\cong R_\p$.

    Now assume $\q=\p_i\in\Min R\sm\Max R$.
    For any $\m\in\Max R$ we get $\q R_\m=\q^2 R_\m=R_\m$ if $\q\not\ss\m$.
    If $\q\ss\m$, then Lemma \ref{1 to 2} gives that $R_\m$ is a domain, so $\q R_\m=\q^2 R_\m=0$.
    Thus $\q$ and $\q^2$ agree locally, hence $\q=\q^2$ and $R/\q^N=R/\q$ is a domain of dimension $1$.
    However, Lemma \ref{1 to 2} says that $\brc{R/\q}_\m\cong R_\m$ is a DVR for any $\m\supseteq\q$.
    It follows that $R/\q$ is a Dedekind domain.
    From Lemma \ref{1 to 2} we have $\rk R=2$, so at least one of the Dedekind components is non-principal (by Proposition \ref{rank of finite direct product}).
\end{proof}

\begin{proof}[Proof of Theorem \ref{main result}]
    Statement (\ref{decomp}) follows from (\ref{first}) by Theorem \ref{main lemma}.
    
    Conversely, assume (\ref{decomp}) holds.
    Hence Proposition \ref{rank of finite direct product} implies $\rk R= 2$.
    However, all maximal localizations of $R$ are localizations of either principal ideal rings or Dedekind domains, hence are principal. Thus
    $\lrk R=1<\rk R$,
    so $R$ is not of local rank.
\end{proof}

\section{Examples}\label{examples}

\subsection{Reduced rings}\label{reduced rings}

From Theorem \ref{main result} we obtain the following result.

\begin{corollary}\label{main for reduced case}
    A reduced ring is of local rank if and only if it is not a direct product of finite number of fields and Dedekind domains with at least one non-principal ideal ring.
\end{corollary}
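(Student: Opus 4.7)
My plan is to derive this as a direct consequence of Theorem \ref{main result}, by identifying which of the direct-product decompositions described there actually yield reduced rings. Theorem \ref{main result} tells us that $R$ is not of local rank if and only if $R$ is a finite direct product of local principal ideal Artinian rings and Dedekind domains, with at least one non-principal ideal ring among the factors. So the task reduces to determining when such a product is reduced.

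A finite direct product $\prod_{i=1}^{n} R_i$ is reduced if and only if every factor $R_i$ is reduced. Dedekind domains are integral domains and hence automatically reduced, so the substantive question is: which local principal ideal Artinian rings are reduced? If $(A,\m)$ is local Artinian, then $\m$ is the unique prime of $A$ and is nilpotent, so $\m$ coincides with the nilradical of $A$; consequently $A$ is reduced precisely when $\m=0$, i.e.\ when $A$ is a field. Conversely, every field is trivially a local principal ideal Artinian ring.

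Combining these observations, a reduced ring $R$ fails to be of local rank if and only if $R$ is a finite direct product of fields and Dedekind domains with at least one non-principal Dedekind factor. Taking the contrapositive yields the statement of the corollary. I do not anticipate any real obstacle here: the whole argument is a direct specialization of Theorem \ref{main result} to the reduced setting, together with the elementary identification of reduced local Artinian rings with fields.
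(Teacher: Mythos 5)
Your proof is correct and matches the paper's intent: the paper derives this corollary directly from Theorem \ref{main result} (with no further argument written out), and your specialization — a finite product is reduced iff each factor is, Dedekind domains are reduced, and a reduced local principal ideal Artinian ring is exactly a field — is precisely the missing routine verification. Nothing to add.
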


Hence we have the following generalization of Theorem \ref{motivation} to an arbitrary dimension.

\begin{corollary}
    An integral domain is of local rank if and only if it is not a non-principal Dedekind domain.
\end{corollary}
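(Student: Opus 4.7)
The plan is to derive this directly from Corollary \ref{main for reduced case}, since every integral domain is reduced. That corollary reduces the problem to describing which integral domains admit a decomposition as a direct product of finitely many fields and Dedekind domains with at least one non-principal factor.

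My first step is to use the fact that a domain has no nontrivial idempotents: any decomposition $R\cong R_1\ti\ldots\ti R_n$ with nonzero factors forces $n=1$. So if $R$ admits a decomposition of the form appearing in Corollary \ref{main for reduced case}, then $R$ itself must be either a field or a Dedekind domain. I would invoke the convention of this paper (consistent with Theorem \ref{main lemma}, where the Dedekind components arise as $R/\q$ for $\q\in\Min R\sm\Max R$) that Dedekind domains have dimension one, so that fields and Dedekind domains are disjoint classes of rings. Since a field is trivially a principal ideal ring, the condition "at least one non-principal component" then collapses to the condition that $R$ be a non-principal Dedekind domain.

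Combining these observations with Corollary \ref{main for reduced case} gives the equivalence: an integral domain $R$ fails to be of local rank if and only if $R$ is a non-principal Dedekind domain. The argument is essentially bookkeeping; no substantive obstacle is expected, and the only point requiring a little care is fixing the convention that separates fields from Dedekind domains so that the decomposition in Corollary \ref{main for reduced case} is unambiguous when applied to a domain.
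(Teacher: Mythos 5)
Your proposal is correct and follows essentially the same route as the paper, which states this corollary as an immediate consequence of Corollary \ref{main for reduced case}: since a domain has no nontrivial idempotents, the only possible decomposition has a single factor, and since fields are principal ideal rings the "non-principal component" condition forces that factor to be a non-principal Dedekind domain. The convention issue you raise (whether fields count as Dedekind domains) is harmless either way, because a field is always principal and so can never supply the required non-principal component.
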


\subsection{Polynomial rings}\label{polynomial rings}

Let $R$ be a Noetherian polynomial ring.
This means that\linebreak$R=A[t_1,\ldots,t_n]$, where $A$ is a ring, and $t_1,\ldots,t_n$ are indeterminates.
If $n\ge 2$, then $\dim R\ge 2$ and from Theorem \ref{main result} one has that $R$ is of local rank.
So we may assume $n=1$.
As $A$ is a quotient of $R$, it is Noetherian.
If $\dim A\ge 1$, then $\dim R\ge 2$, so again by Theorem \ref{main result} $R$ is of local rank.
Hence, assume now that $A$ is Artinian.
It is known that $R=A[t]$ is of finite rank \cite[Theorem 4.4]{Clark-2017}.

\begin{proposition}\label{polynomials over ring is can be computed locally}
    The ring $R=A[t]$ is of local rank.
\end{proposition}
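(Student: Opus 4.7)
My plan is to reduce to the case where $A$ is local Artinian and then invoke Theorem~\ref{main result} contrapositively. Since any Artinian ring is a finite product of local Artinian rings, I write $A\cong\Prod_{i=1}^nA_i$ with each $A_i$ local Artinian, whence $R\cong\Prod_{i=1}^nA_i[t]$. By Proposition~\ref{rank of finite direct product}, $\rk R=\max_i\rk A_i[t]$; and since the maximal ideals of the product $R$ are exactly those obtained by choosing a maximal ideal inside a single factor and localization respects the product decomposition, we also have $\lrk R=\max_i\lrk A_i[t]$. Hence it suffices to show that every $A_i[t]$ is of local rank.

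The key observation is that $A_i[t]$ is indecomposable. Writing $\m_i$ for the maximal ideal of $A_i$, any idempotent $f\in A_i[t]$ reduces to an idempotent of the domain $\brc{A_i/\m_i}[t]$, so either $f$ or $1-f$ lies in $\m_i[t]$. Since $\m_i$ is nilpotent in the Artinian local ring $A_i$ and $f^N=f$ for every $N\ge 1$, this forces $f\in\brs{0,1}$.

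Now I argue by contradiction: suppose some $A_i[t]$ is not of local rank. By Theorem~\ref{main result}, $A_i[t]\cong\Prod_{j=1}^mB_j$ where each $B_j$ is either a local principal Artinian ring or a Dedekind domain, with at least one $B_j$ not a principal ideal ring. Every such $B_j$ is itself indecomposable, so the indecomposability of $A_i[t]$ forces $m=1$. Since $\dim A_i[t]=1$ while any local principal Artinian ring is zero-dimensional, $B_1$ must be a Dedekind domain, and in particular reduced; but any nilpotent of $A_i$ remains nilpotent in $A_i[t]$, so $A_i$ itself is reduced, hence a field. Then $A_i[t]$ is a PID, contradicting the requirement that $B_1$ be non-principal. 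The main obstacle is the indecomposability step, which I dispatch via the nilpotency of $\m_i$; the remainder is a direct application of Theorem~\ref{main result}.
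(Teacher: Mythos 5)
Your argument is correct, but it takes a genuinely different route from the paper. You first split $A$ into local Artinian factors $A_i$, reduce to each $A_i[t]$ via the compatibility of both $\rk$ and $\lrk$ with finite products (the $\lrk$ half is not stated in the paper but is standard, and your justification via maximal ideals of a product is fine), prove that $A_i[t]$ is indecomposable using the nilpotency of $\m_i$, and then apply Theorem \ref{main result} factor-wise: the decomposition must have a single component, which by dimension must be a non-principal Dedekind domain, forcing $A_i$ to be reduced, hence a field, hence $A_i[t]$ a PID --- contradiction. The paper instead applies Theorem \ref{main result} to $R=A[t]$ as a whole and passes to $B=R/\nil R$: the decomposition forces $\rk B=2$ by Proposition \ref{rank of finite direct product}, while $B\cong(A/\nil A)[t]\cong F_1[t]\ti\ldots\ti F_n[t]$ is a finite product of PIDs and so has rank $1$. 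Both proofs ultimately rest on the fact that modulo nilpotents $A[t]$ becomes a product of polynomial rings over fields; the paper's version is shorter because the rank comparison on $B$ replaces your indecomposability lemma and the product formula for $\lrk$, whereas your version has the mild merit of never computing a rank of the reduced ring and of isolating the (reusable) observation that $A_i[t]$ has no nontrivial idempotents when $A_i$ is local Artinian.
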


\begin{proof}
    Suppose $R$ is not of local rank.
    Hence Theorem \ref{main result} implies that $B\coloneqq R/\nil R$ is a direct product of finite number of fields and Dedekind domains with at least one non-principal ideal ring.
    Thus $\rk B=2$, by Proposition \ref{rank of finite direct product}.
    
    On the other hand, $\dim A=0$ implies $A/\nil A\cong F_1\ti\ldots\ti F_n$ for some fields $F_i$.
    By Proposition \ref{rank of finite direct product}, $B\cong \brc{A/\nil A}[t]\cong F_1[t]\ti\ldots\ti F_n[t]$ is a principal ideal ring, which contradicts $\rk B=2$.
\end{proof}

\section*{Acknowledgements}

I would like to thank Alexander Sivatski for the proof of the uniqueness in Theorem \ref{main lemma}, as well as for advice and helpful comments.

This work was performed at the Saint Petersburg Leonhard Euler International Mathematical Institute and supported by the Ministry of Science and Higher Education of the Russian Federation (agreement no. 075-15-2025-343).


\begin{thebibliography}{6}
    \bibitem{Clark-2017} Clark, P. L. (2018). A note on rings of finite rank. \textit{Communications in Algebra}. 46(10): 4223–4232. DOI:\\\url{https://doi.org/10.1080/00927872.2017.1392540}
    \bibitem{Cohen-1950} Cohen, I. S. (1950). Commutative rings with restricted minimum condition. \textit{Duke Math. J.} 17(1): 27–42. DOI:\\\url{https://doi.org/10.1215/S0012-7094-50-01704-2}
    \bibitem{Forster} Forster, O. (1964). Über die Anzahl der Erzeugenden eines Ideals in einem Noetherschen Ring. \textit{Math. Z.} 84: 80-87. DOI:\\\url{https://doi.org/10.1007/BF01112211}
    \bibitem{Sally-1978} Sally, J. D. (1978). \textit{Numbers of generators of ideals in local rings}. M. Dekker: Lecture notes in pure and applied mathematics.
    \bibitem{Swan} Swan, R. G. (1967). The number of Generators of a Module. \textit{Math. Z.} 102: 318-322. DOI:\\\url{https://doi.org/10.1007/BF01110912}
\end{thebibliography}
\end{document}